\documentclass[10pt]{article}
\usepackage[a4paper]{geometry}
\usepackage{amsfonts}
\usepackage{amsmath}
\usepackage{amssymb}
\usepackage{amsthm}
\usepackage{bm}
\usepackage{xcolor}
\usepackage{faktor}
\usepackage{graphicx}
\usepackage{physics}
\usepackage{enumitem}
\usepackage{hyperref}
\hypersetup{linktoc=page}
\usepackage[capitalize, noabbrev]{cleveref}

\setitemize[0]{label=---}

\newcommand{\la}{\langle}
\newcommand{\ra}{\rangle}

\newcommand{\C}{{\mathbb C}}
\newcommand{\R}{{\mathbb R}}

\newcommand{\Z}{{\mathbb Z}}

\newcommand{\Chi}{{\mathcal X}}
\newcommand{\LL}{{\mathfrak L}}

\newtheorem{theorem}{Theorem}
\newtheorem{thmx}{Theorem}

\newtheorem{lemma}{Lemma}
\newtheorem{prop}{Proposition}
\newtheorem{cor}{Corollary}

\newtheorem{rem}{Remark}

\title{The Lie algebra generated by gradient vector fields}
\author{Bettina Kazandjian\footnote{Sorbonne Université, Université Paris Cité, CNRS, Inria, Laboratoire Jacques-Louis Lions, Paris, France, \href{mailto:bettina.kazandjian@sorbonne-universite.fr}{\texttt{bettina.kazandjian@sorbonne-universite.fr}}}, Omar Mohsen\footnote{Université Paris Cité, Sorbonne Université, CNRS, IMJ-PRG, F-75013 Paris, France, \href{mailto:omar.mohsen@imj-prg.fr}{\texttt{omar.mohsen@imj-prg.fr}}}, Eugenio Pozzoli\footnote{Univ Rennes, CNRS, IRMAR - UMR 6625, F-35000 Rennes, France, \href{mailto:eugenio.pozzoli@univ-rennes.fr}{\texttt{eugenio.pozzoli@univ-rennes.fr}}}}
\date{}

\begin{document}

\maketitle
\begin{abstract}
Let $M$ be a smooth compact manifold, and $g$ a smooth non-degenerate symmetric bilinear form on $TM$. We prove that every smooth vector field can be written as a finite linear combination of iterated Lie brackets of gradient vector fields.
\end{abstract}

\textbf{Keywords:} Gradient vector fields, Lie algebras, elliptic operators.

\textbf{MSC Codes 2020:} 17B66, 47F10, 58D05.

\section{Introduction}

Let $M$ be a smooth compact manifold, $g$ a smooth non-degenerate symmetric bilinear form on $TM$. The set of real smooth vector fields on $M$ is denoted by $\Chi(M,\R)$. For any $f \in C^{\infty}(M,\R)$, we define the gradient $\nabla_gf$ of $f$ with respect to $g$ by the formula $\mathrm{d}f_x(v)=g_x(\nabla_gf(x),v)$ for every $v\in T_xM$. The set of gradient vector fields is not closed under Lie brackets, when $g$ is symmetric. For every $m\in \Z_+$, the vector space spanned (over $\mathbb{R}$) by Lie brackets of depth lower than or equal to $m$ of gradient vector fields is denoted by $\LL_g^m$. More precisely, $\LL_g^0:=\left\{\nabla_gf \mid f\in C^\infty(M,\R)\right\}$ and
\begin{equation*}
    \LL_g^m:=\left\{X+[\nabla_gf,Y]\mid f\in C^{\infty}(M,\R),X,Y\in \LL_g^{m-1}\right\}, \qquad  m \in \Z_+^*.
\end{equation*}
The Lie algebra generated by gradient vector fields is denoted by
\begin{equation*}
    \LL_g:=\mathrm{Lie}\left\{\nabla_gf\mid f \in C^\infty(M,\R)\right\}=\bigcup_{m\in \mathbb{Z}_+}\LL_g^m.
\end{equation*}
A natural question is whether the Lie algebra $\LL_g$ coincides with the space of smooth vector fields. The main result of this article provides a positive answer to such question.
\begin{thmx}\label{theorem : main theorem}
    There exists an integer $m\in \Z_+$ such that $\LL_g^m=\Chi(M,\R)$, that is, every smooth vector field can be written as a linear combination of iterated Lie brackets of gradient vector fields with a maximum depth of $m$.
\end{thmx}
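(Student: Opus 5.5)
The plan is to realise a large part of $\Chi(M,\R)$ as the image of an elliptic-type operator built from depth-one brackets. Then a finite-dimensional remainder is closed off using the fact that $\bigcup_m \LL_g^m$ is a Lie algebra.

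\textbf{Step 1: a first-order family in $\LL_g^1$.} Let $\nabla$ denote the Levi-Civita connection of $g$; it exists and is torsion-free also when $g$ is only non-degenerate. Write $H_h$ for the $g$-symmetric endomorphism $v\mapsto \nabla_v\nabla_g h$. Then $[\nabla_g f,\nabla_g h]=H_h\nabla_g f-H_f\nabla_g h$. Differentiating $\mathrm{d}f(\nabla_g h)=g(\nabla_g f,\nabla_g h)$ and using the symmetry of Hessians gives
\[
\nabla_g\big(g(\nabla_g f,\nabla_g h)\big)=H_f\nabla_g h+H_h\nabla_g f .
\]
Adding the two identities yields
\[
2H_h\nabla_g f=[\nabla_g f,\nabla_g h]+\nabla_g\big(g(\nabla_g f,\nabla_g h)\big)\in\LL_g^1 .
\]
Note that the map $f\mapsto H_h\nabla_g f$ is \emph{first order} in $f$, with principal symbol $\xi\mapsto H_h\xi^\sharp$ up to a factor $i$. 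This matters because the naive operator $f\mapsto[\nabla_g h,\nabla_g f]$ is second order with rank-one symbol proportional to $\xi^\sharp$, so it cannot be elliptic.

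\textbf{Step 2: ellipticity and finite codimension.} Using a finite atlas, a partition of unity and local functions such as $x_jx_k$ cut off, I will choose $h_1,\dots,h_N\in C^\infty(M,\R)$ such that at every point the endomorphisms $\mathrm{Id},H_{h_1},\dots,H_{h_N}$ span all $g$-symmetric endomorphisms of $T_xM$. For $\xi^\sharp\neq0$ the vectors $S\xi^\sharp$, with $S$ $g$-symmetric, span $T_xM$. Hence the operator
\[
P:C^\infty(M,\R)^{N+1}\to\Chi(M,\R),\qquad P(f_0,\dots,f_N)=\nabla_g f_0+\sum_i H_{h_i}\nabla_g f_i ,
\]
has surjective principal symbol. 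To form adjoints I fix an auxiliary Riemannian metric and a density, so $PP^*$ is a second-order elliptic operator on vector fields; this removes any issue with indefinite $g$. On the compact manifold $M$, $PP^*$ is Fredholm. Therefore $W:=\mathrm{Im}\,P\supset\mathrm{Im}\,PP^*$ has finite codimension in $\Chi(M,\R)$, with complement identified with the finite-dimensional space $K=\ker P^*$ of smooth fields. By Step 1, $W\subset\LL_g^1$.

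\textbf{Step 3: closing the gap (main obstacle).} The images of the increasing chain $\LL_g^1\subset\LL_g^2\subset\cdots$ in the finite-dimensional quotient $\Chi(M,\R)/W$ must stabilise. So there is an $m$ with $\LL_g^m=\LL_g$, and $\LL_g$ is a finite-codimensional Lie subalgebra containing $W$. Its annihilator $A\subset K$ is finite-dimensional. A field $Z\in A$ satisfies $\langle P u,Z\rangle=0$ and $\langle[\nabla_g f,Pu],Z\rangle=0$ for all $u$ and $f$, where $\langle\cdot,\cdot\rangle$ is the $L^2$ pairing for the auxiliary metric. I would expand $[\nabla_g f,Pu]$ as $P(\text{something in }u,f)$ plus a commutator term that is a differential operator in $u$ whose coefficients involve derivatives of $f$. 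Choosing $f$ freely, localised and oscillating, the condition forces $Z$ to satisfy an overdetermined system whose symbol at every covector has trivial kernel, hence $Z=0$ locally, hence globally. Carrying out this local symbol argument cleanly, possibly with iterated brackets $[\nabla_g f_1,[\nabla_g f_2,Pu]]$ if one bracket is insufficient, is the step I expect to be hardest. Once $A=0$, we get $\LL_g^m=\Chi(M,\R)$, which proves Theorem~\ref{theorem : main theorem}.
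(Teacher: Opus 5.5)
Your Steps 1 and 2 are correct, and they prove the paper's Proposition~1 by a different construction. The paper sums $[\nabla_g f_j,\nabla_g(f_jh)]$ over a family with $\sum_j f_j^2=1$ to cancel the second-order part. Your identity $2H_h\nabla_g f=[\nabla_g f,\nabla_g h]+\nabla_g\big(g(\nabla_g f,\nabla_g h)\big)$ gives first-order operators with symbols $\xi\mapsto H_h\xi^\sharp$ directly, and spanning Hessians then gives surjectivity. The stabilisation argument giving $\LL_g=\LL_g^m$ is also fine.

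The gap is Step~3, which is the only part of the theorem that does not follow from finite codimension. You leave it as a plan, and the mechanism you describe cannot work as stated. An overdetermined system for $Z$ with injective symbol only confines $Z$ to a finite-dimensional space; already $\mathrm{d}u=0$ on functions has the constants as solutions. You knew that before Step~3: $A\subset K=\ker P^*$, and $P^*$ is itself such a system. To get $Z=0$ you need a constraint of order zero on $Z$, extracted from the freedom in $f$. In the paper this is what Grabowski's theorem supplies. A proper Lie subalgebra of finite codimension lies in a maximal one, hence in some stabiliser $\{X\mid X(x)=0\}$, and no stabiliser contains all gradients.

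Your set-up can be completed directly, and one bracket suffices. Let $\tilde g,\mu$ be your auxiliary metric and density, and for $Z\in A$ put $\beta=\tilde g(Z,\cdot)$. Let $R_Zf$ be the smooth field with $\langle Y,R_Zf\rangle_{L^2}=\langle[\nabla_g f,Y],Z\rangle_{L^2}$ for all $Y$. Integrating by parts gives
$R_Zf=-\tilde g^{-1}\big(\mathcal{L}_{\nabla_g f}\beta+\mathrm{div}_\mu(\nabla_g f)\,\beta\big)$.
This is second order in $f$, with principal symbol $\tilde g^{-1}\big(\beta_x(\xi^\sharp)\,\xi+\xi(\xi^\sharp)\,\beta_x\big)$ at $(x,\xi)$, where $\xi^\sharp$ is the $g$-dual of $\xi$. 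Your condition says $P^*R_Zf=0$ for every $f$. Hence the principal symbol $\sigma^1(P^*,x,\xi)\,\sigma^2(R_Z,x,\xi)$ of this third-order operator vanishes. Since $\sigma^1(P^*,x,\xi)$ is injective, $\beta_x(\xi^\sharp)\,\xi+\xi(\xi^\sharp)\,\beta_x=0$ for all $\xi$. Take $\xi$ with $\xi(\xi^\sharp)\neq0$ and evaluate on $\xi^\sharp$: this gives $\beta_x(\xi^\sharp)=0$, then $\beta_x=0$, so $Z=0$.

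Alternatively, $R_Z$ maps into the finite-dimensional space $A$, since $\LL_g$ is $\mathrm{ad}_{\nabla_g f}$-stable. A nonzero differential operator has infinite-dimensional range (use bump functions with disjoint supports), so $R_Z=0$. Evaluating at $x$ for an $f$ with $\mathrm{d}f_x=0$ and $H_f(x)=\mathrm{Id}$ then gives $(\dim M+1)Z(x)=0$. Some argument of this kind must be written out before your proof is complete.
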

We note that Theorem \ref{theorem : main theorem} is in strong contrast to the case where $g$ is antisymmetric. For example, if $g$ is a symplectic form, then the space of gradient vector fields (usually called Hamiltonian vector fields in this case) is closed under Lie brackets, and thus forms a proper Lie sub-algebra  of $\Chi(M,\R)$.

The set of $C^\infty$-diffeomorphisms of $M$ connected to the identity is denoted by ${\rm Diff_0}(M)$. We endow it with the compact-open topology.

Let $\mathcal{G}_g$ be the group generated by $\exp(\LL_g^0)$, that is, the group whose elements are compositions of flows of gradient vector fields. More precisely,
$$\mathcal{G}_g:=\{\exp(\nabla_g f_n)\circ\dots\circ \exp(\nabla_g f_1)\mid f_1,\dots,f_n\in C^\infty(M,\R), n\in \Z_+\},$$
where $\exp(t X)$ denotes the flow at time $t\in \R$ of the ODE $\dot{x}(t)=X(x(t)), x\in M, X\in \Chi(M,\R)$. We have the following consequence of Theorem \ref{theorem : main theorem}.
\begin{cor}\label{cor : main corollary}
The group $\mathcal{G}_g$ is dense in ${\rm Diff_0}(M)$.
\end{cor}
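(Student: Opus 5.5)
Given Theorem \ref{theorem : main theorem}, the plan is to show that the closure $\overline{\mathcal{G}_g}$ (in the compact-open topology, equivalently the $C^\infty$ topology on ${\rm Diff}_0(M)$ if one prefers the stronger statement) contains $\exp(X)$ for every $X\in \Chi(M,\R)$. Since ${\rm Diff_0}(M)$ is generated by time-one flows of smooth vector fields, and $\overline{\mathcal{G}_g}$ is a group, this gives density. Indeed, every isotopy $\phi_t$ from the identity is a composition of finitely many diffeomorphisms close to the identity, and each of these is a time-one flow. Alternatively, one approximates the time-dependent flow of $\dot\phi_t\circ\phi_t^{-1}$ by products of autonomous flows $\exp(\frac1N X_{k/N})$; this is a standard Euler-type approximation.

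The key step is an induction on the bracket depth $m$. Let $\mathcal{S}$ be the set of $X\in\Chi(M,\R)$ such that $\exp(tX)\in \overline{\mathcal{G}_g}$ for all $t\in\R$. I claim $\mathcal{S}\supseteq \LL_g^m$ for every $m$. The base case holds because $\LL_g^0\subset \mathcal{S}$ by definition, and $t\nabla_g f=\nabla_g(tf)$.

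I will then show that $\mathcal{S}$ is a real vector space closed under Lie brackets. This rests on the two classical product formulas, with convergence uniform on the compact manifold $M$ (together with all derivatives, by ODE dependence on parameters):
\[
\exp(tX+tY)=\lim_{n\to\infty}\bigl(\exp(\tfrac tn X)\circ\exp(\tfrac tn Y)\bigr)^n,
\]
\[
\exp(t^2[X,Y])=\lim_{n\to\infty}\bigl(\exp(-\tfrac tn X)\circ\exp(-\tfrac tn Y)\circ\exp(\tfrac tn X)\circ\exp(\tfrac tn Y)\bigr)^{n^2}.
\]
The sign convention in the second formula may need to be swapped, but this is harmless since $\mathcal{S}=-\mathcal{S}$. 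For $t<0$, use $-[X,Y]=[Y,X]$. Because $\overline{\mathcal{G}_g}$ is a closed subgroup, the limits lie in it. Hence $\LL_g\subset\mathcal{S}$, and by Theorem \ref{theorem : main theorem}, $\mathcal{S}=\Chi(M,\R)$.

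The main obstacle is justifying the Trotter and commutator product formulas for flows on a compact manifold with the required uniformity. I will prove them via local Taylor expansion. Let $\psi_s$ be the composed map (for the sum, $\psi_s=\exp(sX)\exp(sY)$, and similarly for the commutator). In charts it satisfies $\psi_s = \mathrm{id} + sZ + O(s^2)$ uniformly, with $Z=X+Y$ in the first case and $\psi_s=\mathrm{id}+s^2[X,Y]+O(s^3)$ in the second. The global error is then controlled by a Lipschitz/Gronwall estimate: composing $n$ maps, each $O(n^{-2})$-close to the corresponding step of the exact flow, accumulates to an error of $O(n^{-1})$, because the flows are uniformly Lipschitz on compact $M$. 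This gives $C^0$ convergence, which suffices for the compact-open topology. Derivatives can be handled identically if $C^\infty$ density is desired.
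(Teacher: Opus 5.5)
Your core step is the paper's argument. $\mathcal S$ is a Lie subalgebra containing all gradients by the Trotter and commutator product formulas, so $\mathcal S=\LL_g=\Chi(M,\R)$ by Theorem~\ref{theorem : main theorem}. The paper simply quotes the two formulas, while you sketch the telescoping error estimate. For the commutator formula that estimate involves $n^2$ factors, each within $O(n^{-3})$ of $\exp(\frac{t^2}{n^2}[X,Y])$, which again gives $O(1/n)$.

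You differ from the paper in how you reduce ${\rm Diff}_0(M)$ to time-one maps, and your first justification there is false. A diffeomorphism close to the identity need not be the time-one map of any autonomous vector field, because the exponential map of ${\rm Diff}(M)$ is not locally onto. For example, on the circle take $f_n(\theta)=\theta+\frac{2\pi}{n}+\varepsilon_n\sin^2(\frac{n\theta}{2})$ with $\varepsilon_n>0$ small enough. Then $f_n$ has no fixed point and has a periodic orbit of period $n$, but $f_n^n\neq\mathrm{id}$. A time-one map of a vector field with a zero has a fixed point, and one of a nonvanishing vector field is conjugate to a rotation. So $f_n$ is not a time-one map, although $f_n\to\mathrm{id}$ in $C^\infty$. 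The statement that ${\rm Diff}_0(M)$ is \emph{generated} by time-one maps is true. However, it is exactly what the paper extracts from Thurston's simplicity theorem, using that the subgroup generated by flows is normal, since $\varphi\circ\exp(X)\circ\varphi^{-1}=\exp(\varphi_*X)$.

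Your alternative is correct and is the one to keep: approximate $\varphi_1$ by the Euler products $\exp(\frac1N X_{(N-1)/N})\circ\cdots\circ\exp(\frac1N X_0)$. Each factor is within $O(N^{-2})$ of the exact propagator from $k/N$ to $(k+1)/N$, and these propagators are uniformly Lipschitz, so the same telescoping argument gives uniform convergence. Since $\overline{\mathcal G_g}$ is a closed subgroup containing every $\exp(X)$, this already yields ${\rm Diff}_0(M)\subset\overline{\mathcal G_g}$ without any exact generation result.

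Your route is therefore more elementary than the paper's. It replaces Thurston's deep theorem by an ODE estimate, plus the standard fact that elements of ${\rm Diff}_0(M)$ are joined to the identity by smooth isotopies. The paper's route gives the stronger fact that every element of ${\rm Diff}_0(M)$ is a finite product of time-one maps, so only these need to be approximated; that extra strength is not needed for density.
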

From a geometric control viewpoint, the Lie algebra $\LL_g$ can be seen as the space of admissible directions for the infinite-dimensional control system, where the state is a diffeomorphism of $M$ and every gradient vector field is a directly accessible control field. More precisely, $\LL_g$ is the space of admissible directions for the flow of the following non-autonomous ODE
%Such Lie algebra appears in evolution problems where dynamics are directly propagated along arbitrary gradient vector fields, hence indirectly also along linear combination of their iterated Lie brackets. It thus describes the space of admissible directions for non-autonomous dynamics of the form 
\begin{equation}\label{eq:control}
\dot{x}(t)=\nabla_g f(x(t),t), \quad x\in M, t\geq 0,
\end{equation}
where $(t\mapsto f(\cdot,t))\in {\rm PWC}([0,\infty), C^\infty(M,\R))$ is a piecewise constant-in-time control law which can select at any time an arbitrary real-valued smooth function over $M$. Such control problem arise e.g. in quantum mechanical systems modelled through Schrödinger PDEs. Indeed, in \cite{beauchard-pozzoli2} it is proved that, if the potential is rich enough, the wavefunction can be transported along the flow generated by every gradient vector field. The group $\mathcal{G}_g$ describes then the space of reachable configurations for the flow of \eqref{eq:control}, and Corollary \ref{cor : main corollary} is equivalent to the approximate controllability in the group of diffeomorphisms of control system \eqref{eq:control}.

\subsection{Bibliographic comments}
An approximate version, for flat tori and Euclidean spaces, of Theorem \ref{theorem : main theorem} is established in \cite[Propositions 31 \& 32]{beauchard-pozzoli2}. More precisely, it is proved that every smooth vector field on the flat torus or Euclidean space can be $C^1$-approximated by linear combinations of iterated Lie brackets of a finite family of gradient vector fields.

Concerning Corollary \ref{cor : main corollary}, we mention the article \cite{berger}, where a similar question is studied. In particular, the group generated by vertical and horizontal shears is proved to be dense in the group of Hamiltonian diffeomorphisms of tori and Euclidean spaces. Vertical and horizontal shears are two abelian groups defined as flows of symplectic gradient vector fields of functions depending either only on position or momentum variables.

With respect to those previous contributions, the novelty of the present article consists in proving the results, previously established on flat tori and Euclidean spaces, on every compact manifold equipped with a non-degenerate symmetric bilinear form. This is achieved by reformulating the question as an ellipticity problem of differential operators.

We also notice that Corollary \ref{cor : main corollary} may alternatively be deduced from Theorem \ref{theorem : main theorem} by applying \cite[Theorem D]{Liu}, but we chose to derive it in a slightly more constructive way.

A further question, that remains open, is whether the group $\mathcal{G}_g$ is equal to ${\rm Diff_0}(M)$. We mention the article \cite{agrachev-caponigro} (and also \cite{trelat-2017} for related investigations in Sobolev spaces), where a similar question is studied. More precisely, it is proved that the group generated by the flows of any family of vector fields, that satisfies Hörmander's condition and that is invariant under multiplication by smooth functions, is equal to ${\rm Diff_0}(M)$. Notice that those hypothesis do not apply to our setting, since gradient vector fields are not invariant under multiplication by smooth functions.

\medskip

The article is organized as follows. In Section \ref{sec:outline} we outline the scheme of the proof of Theorem \ref{theorem : main theorem}. In Section \ref{sec:elliptic} we recall some needed facts on elliptic operators. Section \ref{sec:core} contains the core of the proof of Theorem \ref{theorem : main theorem}. We conclude in Section \ref{sec:corollary} by proving Corollary \ref{cor : main corollary}.

\section{Proof of Theorem \ref{theorem : main theorem}}\label{sec:outline}
The proof of Theorem \ref{theorem : main theorem} is based on a theorem of J. Grabowski \cite{grabowski}, and \cref{theorem : finite codimension}.

\begin{theorem}[J. Grabowski] \label{theorem : Grabowski}
    Let $L\subsetneq\Chi(M,\R)$ be a maximal Lie algebra of finite codimension. Then there exists $x\in M$ such that $L=\{X\in \Chi(M,\R)\mid X(x)=0\}$.
\end{theorem}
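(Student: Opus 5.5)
The plan is to reduce the problem to finite-dimensional jet algebras at finitely many points, and then use maximality. Let $L\subsetneq\Chi(M,\R)$ be a maximal Lie subalgebra of codimension $N<\infty$. It suffices to find a point $x\in M$ with $L\subseteq L_x:=\{X\mid X(x)=0\}$. Indeed, $L_x$ is a proper subalgebra, so maximality of $L$ then forces $L=L_x$.

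\textbf{Step 1 (functions acting on $L$).} For a vector field $Y$, let $A_Y:=\{f\in C^\infty(M,\R)\mid fY\in L\}$. This is a linear subspace of codimension $\le N$. Next, set $A:=\{f\mid f\,\Chi(M,\R)\subseteq L\}$. To control $A$, I would use the bracket identity
\[
[fX,gY]=fg[X,Y]+fX(g)Y-gY(f)X .
\]
With this identity and the finite codimension, one shows that $A$ contains a nonzero ideal $I\subseteq C^\infty(M,\R)$ of finite codimension. A first route is to vary $X,Y$ over a finite set of vector fields spanning $TM$ at every point, together with a finite family of test functions, and intersect the resulting finite-codimensional subspaces. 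A second route, in Grabowski's spirit, is to take the largest $C^\infty$-submodule of $\Chi(M,\R)$ contained in $L$ and show that it has finite codimension. This step is the main obstacle. The difficulty is that $L$ is only a Lie subalgebra and not a module, so the bracket identity has to be iterated carefully so that the error terms $fX(g)Y$ and $gY(f)X$ are absorbed.

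\textbf{Step 2 (finite-codimensional ideals).} On a compact manifold, a finite-codimensional ideal $I$ of $C^\infty(M,\R)$ has finitely many common zeros $x_1,\dots,x_r$. Indeed, by compactness and partitions of unity, an ideal with no common zeros is the whole ring, and infinitely many zeros would violate finite codimension. Moreover, $I\supseteq\mathfrak m_{x_1}^{k}\cdots\mathfrak m_{x_r}^{k}$ for some $k$, since the quotient is a finite-dimensional algebra localized at these points. Consequently, $L$ contains the ideal $\mathcal I_k$ of vector fields vanishing to order $k$ at each $x_i$.

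\textbf{Step 3 (jet algebras).} Now $L/\mathcal I_k$ is a proper subalgebra of the finite-dimensional Lie algebra $\bigoplus_i J^k_{x_i}$ of $k$-jets of vector fields. If Step 2 gives no points at all, then $\mathcal I_k=\Chi(M,\R)\subseteq L$, which contradicts properness; so $r\ge1$. Suppose, for contradiction, that $L\not\subseteq L_{x_i}$ for every $i$.
\begin{itemize}
\item First I would show that the evaluation map $L\to \bigoplus_i T_{x_i}M$ is surjective. Its image is a subspace, and the algebra generated by $L$ together with vector fields vanishing at all $x_i$ is all of $\Chi(M,\R)$. Combined with maximality and a separation argument using fields supported near one $x_i$, this should give surjectivity.
\item Then I would argue degree by degree in local coordinates. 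Brackets of $L$-elements that have nonzero constant part at $x_i$ with homogeneous polynomial fields lower the degree. Starting from the fields of order $\ge k$, which lie in $L$ by Step 2, and descending, one obtains all jets.
\end{itemize}
The descent shows $L=\Chi(M,\R)$, a contradiction. Hence $L\subseteq L_{x_i}$ for some $i$, and maximality yields $L=L_{x_i}$.
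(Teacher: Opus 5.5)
The paper gives no proof of this statement; it quotes it from Grabowski. Your proposal therefore has to stand on its own, and it does not. \textbf{Step 1 is a genuine gap, and it carries the whole content of the theorem.}

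For maximal $L$, which is the only case you need, Step 1 is equivalent to the conclusion. If $L=\{X\mid X(x)=0\}$, then $A=\mathfrak m_x$. Conversely, your Steps 2--3 turn Step 1 into the conclusion by fairly routine arguments. So the proposal reduces the theorem to an equivalent statement and leaves that statement unproved.

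Neither of your two routes closes it.
\begin{enumerate}
\item Intersecting finitely many $A_Y$ only controls $fY$ for finitely many fields $Y$. But $f\,\Chi(M,\R)\subseteq L$ requires $f\in A_{hY}$ for \emph{every} $h\in C^\infty(M,\R)$. That is an infinite intersection of subspaces of codimension $\le N$; its codimension is not bounded a priori, and it is not visibly an ideal.
\item The ``largest submodule'' route simply restates the claim.
\end{enumerate}

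Finite codimension by itself cannot give Step 1. Take a $1$-form $\omega\neq 0$ and a positive density $\mu$. The codimension-one subspace $\{X\mid\int_M\omega(X)\,\mu=0\}$ has largest $C^\infty$-submodule $\{X\mid\omega(X)=0\}$, which has infinite codimension. So the bracket must enter essentially, and you never say how the error terms $fX(g)Y-gY(f)X$ are absorbed.

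One natural way to bring the bracket in is the representation of $L$ on the finite-dimensional space $\Chi(M,\R)/L$. Its kernel $K=\{X\in L\mid [X,\Chi(M,\R)]\subseteq L\}$ has finite codimension, so its elements have only finitely many common zeros. In a flow box $U$ where some $X\in K$ equals $\partial_1$, you get $\{[\partial_1,Y]\mid Y\in\Chi_c(U)\}\subseteq L$, where $\Chi_c(U)$ denotes fields compactly supported in $U$. You would then still have to show two things:
\begin{itemize}
\item brackets of such fields produce all of $\Chi_c(U)$;
\item at the finitely many exceptional points, you can pass from fields vanishing \emph{near} them to fields vanishing to \emph{finite order} at them.
\end{itemize}
That is the real work, and none of it is in the proposal.

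Two smaller points.

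\emph{Step 3.} $\mathcal I_k$ is not a Lie ideal of $\Chi(M,\R)$: bracketing with a field that is nonzero at $x_i$ lowers the order of vanishing there. Hence $\bigoplus_i J^k_{x_i}$ is not a quotient Lie algebra, and ``$L/\mathcal I_k$ is a proper subalgebra'' is not meaningful as stated. This does no harm, because your descent bullet is correct and is all you need.
\begin{itemize}
\item For each $i$ separately, it suffices to have one $X\in L$ with $X(x_i)=v\neq 0$.
\item If $Y$ is supported near $x_i$ and vanishes to order $m\ge 1$ there, then in local coordinates $[X,Y]=\partial_v Y_m+O(|x|^m)$.
\item $\partial_v$ maps homogeneous fields of degree $m$ onto those of degree $m-1$.
\end{itemize}
So you lower the vanishing order at $x_i$ one step at a time without touching the other points. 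In particular, the surjectivity of $L\to\bigoplus_i T_{x_i}M$ in your first bullet, whose justification is only gestured at, is unnecessary.

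\emph{Step 2} is fine as written. The quotient is Artinian, the maximal ideals of $C^\infty(M,\R)$ for compact $M$ are the $\mathfrak m_x$, and Hadamard's lemma identifies $\mathfrak m_x^k$ with functions vanishing to order $k$ at $x$.
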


\begin{prop} \label{theorem : finite codimension}
    The vector space $\LL_g^1$ has finite codimension in $\Chi(M,\R)$.
\end{prop}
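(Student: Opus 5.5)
Before the plan, note what $\LL_g^1$ contains. It contains all brackets $[\nabla_g f,\nabla_g h]$ together with all gradients, and it is a vector space. So it contains the image of the bilinear map $(f,h)\mapsto[\nabla_g f,\nabla_g h]$, and also of any linear map built from finite sums of such brackets.

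The plan is to freeze finitely many auxiliary functions $h_1,\dots,h_N\in C^\infty(M,\R)$ and consider the linear differential operator
\[
P:C^\infty(M,\R)^N\oplus C^\infty(M,\R)\to\Chi(M,\R),\qquad P(f_1,\dots,f_N,f_0)=\sum_{i=1}^N[\nabla_g f_i,\nabla_g h_i]+\nabla_g f_0 .
\]
Its image lies in $\LL_g^1$. If we show that $P$ (or $P$ composed with a suitable operator) is elliptic in the generalized sense of Section \ref{sec:elliptic}, namely that it has a left inverse modulo smoothing operators after forming $PP^*$, then $PP^*$ is an elliptic operator on the bundle $TM$. By standard elliptic theory on a compact manifold, the image of $PP^*$ is closed and of finite codimension in $\Chi(M,\R)$. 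This gives the result, because $\mathrm{im}(PP^*)\subset\mathrm{im}P\subset\LL_g^1$.

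The key computation is the principal symbol. In local coordinates, $[\nabla_g f,\nabla_g h]=\nabla_g f\cdot\nabla(\nabla_g h)-\nabla_g h\cdot\nabla(\nabla_g f)$. The highest-order part in $f$ is $-(\nabla_g h\cdot\partial)\,g^{-1}\mathrm{d}f$, which is second order. Its principal symbol at a covector $\xi\neq 0$ sends $a\in\R$ to
\[
a\,\xi(\nabla_g h_i)\,g^{-1}\xi = a\,\la \xi, \nabla_g h_i\ra\,\xi^\sharp .
\]
Taken alone, each bracket term therefore produces only vectors proportional to $\xi^\sharp$, so it is not surjective onto $T_xM$. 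The fix is to make the operators mixed order, using the first-order terms $\nabla_g f_i\cdot\nabla(\nabla_g h_i)$. Their contribution is $(\nabla\nabla_g h_i)\,\xi^\sharp$, where $\nabla\nabla_g h_i$ is the Jacobian endomorphism $A_i$ of $\nabla_g h_i$.

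To handle the order mismatch, precompose each $f_i$ with $(1+\Delta)^{-1/2}$, or compare orders through a Douglis–Nirenberg type ellipticity. Alternatively, use the symmetry trick of replacing $f_i$ by $f_i h$ products to extract lower-order terms. My concrete first choice is to exploit bilinearity. Set
\[
[\nabla_g(\phi f),\nabla_g h]-[\nabla_g f,\nabla_g(\phi h)]\quad\text{and}\quad [\nabla_g f,\nabla_g h] .
\]
Choosing $h_i$ and cutoff multipliers appropriately, we can arrange that at each point the second-order symbols cancel. What remains is the first-order operator $f\mapsto A\,\nabla_g f$ plus terms proportional to $\nabla_g f$, with $A$ ranging over endomorphisms of the form $\nabla\nabla_g h$. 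Locally, taking $h=\tfrac12 x^TSx$ at a point realizes any $g$-symmetric endomorphism $A$. The vectors $A\xi^\sharp$ over symmetric $A$ span all of $T_xM$ as soon as $\xi\ne0$. So finitely many such terms, patched with a partition of unity and compactness, give a surjective principal symbol, and the resulting operator is underdetermined elliptic.

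The main obstacle I expect is this cancellation and order bookkeeping. We must produce, inside $\LL_g^1$ and with fixed auxiliary functions, a genuinely homogeneous-order operator whose symbol is surjective everywhere, including globally on a possibly indefinite $g$. In the indefinite case $\xi^\sharp$ can be null, but $\xi\ne0$ still gives $\xi^\sharp\neq0$ by nondegeneracy, and symmetric endomorphisms still act transitively on nonzero vectors. Once surjectivity of the symbol is established, the finite-codimension conclusion follows directly from the elliptic facts recalled in Section \ref{sec:elliptic}.
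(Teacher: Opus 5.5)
Your overall frame matches the paper: a surjective first-order symbol makes $PP^*$ elliptic, and $\mathrm{im}(PP^*)\subset\mathrm{im}P\subset\LL_g^1$. But the decisive step is left open, and the mechanisms you sketch for it do not work as stated.

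\textbf{The cancellation step.} Precomposing with $(1+\Delta)^{-1/2}$, or assigning Douglis--Nirenberg weights, only rescales orders. It cannot promote lower-order terms into the principal symbol, so the top symbol of $f\mapsto[\nabla_g f,\nabla_g h]$ stays a multiple of $\xi^\sharp$. Your bilinear combination $f\mapsto[\nabla_g(\phi f),\nabla_g h]-[\nabla_g f,\nabla_g(\phi h)]$ has second-order symbol $-h\la\xi,\nabla_g\phi\ra\,\xi^\sharp$ in your normalization, which is not zero in general.

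What you need is an operator $f\mapsto\sum_k[\nabla_g b_k,\nabla_g(a_kf)]$ whose second-order symbol vanishes identically on $M$. That symbol is a multiple of $\la\xi,\sum_k a_k\nabla_g b_k\ra\,\xi^\sharp$, so the condition is $\sum_k a_k\,\mathrm{d}b_k\equiv 0$. Vanishing at a point is not enough, since otherwise there is no order-one principal symbol. Taking $\phi=h$ and adding $\tfrac12[\nabla_g f,\nabla_g(h^2)]$ achieves this. Up to sign, it gives the identity at the heart of the paper:
\[
[\nabla_g h,\nabla_g(hf)]-\tfrac12[\nabla_g(h^2),\nabla_g f]=2\,\mathrm{d}f(\nabla_g h)\,\nabla_g h+g(\nabla_g h,\nabla_g h)\,\nabla_g f .
\]
The paper instead removes the bracket term by summing $[\nabla_g f_j,\nabla_g(f_jh)]$ over functions with $\sum_j f_j^2=1$.

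\textbf{What survives the cancellation.} Your prediction here is wrong, so your spanning argument is aimed at the wrong object. Write
\[
\sum_k a_k[\nabla_g b_k,\nabla_g f]=\Bigl[\sum_k a_k\nabla_g b_k,\nabla_g f\Bigr]+\sum_k \mathrm{d}a_k(\nabla_g f)\,\nabla_g b_k .
\]
This shows the Jacobians of the $\nabla_g b_k$ enter only through the derivative of the vanishing field $\sum_k a_k\nabla_g b_k$, so they cancel together with the second-order part. The surviving symbol involves only first derivatives:
\[
\sum_k\bigl(\la\xi,\nabla_g a_k\ra\,\nabla_g b_k+\la\xi,\nabla_g b_k\ra\,\nabla_g a_k\bigr)+\sum_k g(\nabla_g a_k,\nabla_g b_k)\,\xi^\sharp .
\]
Getting this form uses the symmetry of $g$, through $\mathrm{d}a(\nabla_g f)=\mathrm{d}f(\nabla_g a)$. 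No Hessian $\nabla\nabla_g h$ appears, so realizing arbitrary symmetric $A$ via $h=\tfrac12x^TSx$ is irrelevant.

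A warning sign is that your heuristic never really uses the symmetry of $g$. Replace $g$-symmetric endomorphisms by $\mathfrak{sp}$, which also acts transitively on nonzero vectors. The same reasoning would then give finite codimension for a symplectic form, where $\LL_g$ consists of Hamiltonian fields and has infinite codimension.

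\textbf{The correct finish.} Combine the displayed operator with $\nabla_g$ itself, whose symbol is $\xi^\sharp$, to obtain $\la\xi,X\ra X$ for arbitrary $X=\nabla_g h(x)$. Then take $X=X_1$ and $X=X_1+X_j$, with $\la\xi,X_1\ra\neq0=\la\xi,X_j\ra$, to span $T_xM$. Globalize by direct sums over a finite cover of the cosphere bundle. Do not multiply outputs by cut-offs, since $\LL_g^1$ is not a $C^\infty(M)$-module.
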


\begin{proof}[Proof of \cref{theorem : main theorem}]
    Since $\LL_g^1 \subset \LL_g$, by \cref{theorem : finite codimension}, $\LL_g$ has also finite codimension in $\Chi(M,\R)$.
    If $\LL_g\neq \Chi(M,\R)$, then there exists a maximal Lie subalgebra $\LL_g\subseteq L\subsetneq \Chi(M,\R)$.
    So, by Grabowski's theorem, there exists $x\in M$ such that for every $X\in \LL_g$, $X(x)=0$.
    This is a contradiction because for every $x\in M$ there exists $f \in C^\infty(M,\R)$ such that $\mathrm{d}f_x\neq 0$. Then necessarily $\nabla_gf(x)\neq 0$, and $\nabla_gf\in \LL_g$.
    Hence, $\LL_g=\Chi(M,\R)$.
    Let us denote by $H \subset \Chi(M,\R)$ a finite dimensional supplementary of $\LL_g^1$ in $\Chi(M,\R)$, that is, $\Chi(M,\R)=\LL_g^1\oplus H$. We have already proved that $\Chi(M,\R)=\LL_g$, so $H\subset \LL_g$ and necessarily there exists $m\in \Z_+$ such that $H \subset \LL_g^m$ because $H$ is finite dimensional. Then $\Chi(M,\R)=\LL_g^1\oplus H \subseteq \LL_g^{\mathrm{max}(1,m)}$.
\end{proof}

The proof of \cref{theorem : finite codimension} relies on a well-known result from the theory of elliptic operators which we now recall.

\section{Elliptic operators}\label{sec:elliptic}

Let $\ell,m \in \Z_+$, $D : C^{\infty}(M,\C^\ell) \rightarrow \Chi(M,\C)$ a linear differential operator of order $m$. We denote by $\sigma^m(D,x,\xi):\C^\ell\to T_xM\otimes \C$ its classical principal symbol at $(x,\xi)\in T^*M\backslash 0$.
Let us recall a consequence of the elliptic regularity theorem, see \cite[Chapter 18]{hormander}.
\begin{theorem}
    If the principal symbol $\sigma^m(D,x,\xi): \C^\ell \rightarrow T_xM\otimes \C$ is bijective for every $(x,\xi)\in T^*M\backslash 0$, then $D$ is elliptic and the image of $D$ denoted by $\mathrm{Im}(D)$ has finite codimension in $\Chi(M,\C)$.
\end{theorem}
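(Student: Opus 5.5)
The plan is to reduce the statement to the Fredholm theory of elliptic operators on a compact manifold, using a parametrix. The ellipticity of $D$ holds by definition: $D$ is a differential operator of order $m$ from sections of the trivial bundle $M\times\C^\ell$ to sections of $TM\otimes\C$, and its principal symbol is invertible on $T^*M\backslash 0$. In particular bijectivity forces $\ell=\dim M$, so both bundles have the same rank. The content of the statement is therefore the finite codimension of $\mathrm{Im}(D)$ in the smooth sections. Throughout I fix a smooth positive density on $M$ and a Hermitian metric on $TM\otimes\C$, so that $L^2$ pairings and formal adjoints are defined.

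\textbf{Step 1 (parametrix).} Using the symbolic calculus of pseudodifferential operators, I construct $Q$ of order $-m$ from sections of $TM\otimes\C$ to $C^\infty(M,\C^\ell)$. In local charts its principal symbol is $\sigma^m(D,x,\xi)^{-1}$, cut off near $\xi=0$. The lower-order terms are then corrected iteratively and the result is summed asymptotically, giving
\[
DQ=I-R,
\]
where $R$ is a smoothing operator, that is, an operator with smooth Schwartz kernel on $M\times M$. Since $M$ is compact, $R$ is compact on $L^2$ and maps distributions to smooth sections.

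\textbf{Step 2 (Fredholm argument).} On $L^2(M,TM\otimes\C)$ the operator $I-R$ is Fredholm by Riesz theory. Hence its range is closed and equals $\ker(I-R^*)^{\perp}$, and $K:=\ker(I-R^*)$ is finite dimensional. Moreover $K$ consists of smooth sections, because $R^*$ is also smoothing and every $w\in K$ satisfies $w=R^*w$.

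Now let $u$ be a smooth vector field orthogonal to $K$. Then $u=(I-R)v$ for some $v\in L^2$. This gives $v=u+Rv$, so $v$ is smooth, and therefore $u=D(Qv)$ with $Qv\in C^\infty(M,\C^\ell)$. Consequently
\[
\Chi(M,\C)\cap K^{\perp}\subseteq \mathrm{Im}(D).
\]
The left-hand side is the kernel of the finitely many continuous functionals $u\mapsto\la u,w_i\ra$, where $(w_i)$ is a basis of $K$. So $\mathrm{Im}(D)$ has codimension at most $\dim K$.

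The main obstacle is Step 1, the construction of the parametrix. It requires the full pseudodifferential calculus on manifolds: composition formulas, asymptotic summation, and patching local inverses with a partition of unity, where the error terms must be checked to be smoothing. I would simply invoke \cite[Chapter 18]{hormander} for this step.

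An alternative route avoids the calculus on the target side. One proves the Gårding-type estimate
\[
\|u\|_{H^{s+m}}\le C\left(\|Du\|_{H^s}+\|u\|_{H^{s}}\right)
\]
by freezing coefficients and applying the Fourier transform locally. The same argument applied to the formal adjoint $D^*$, which is also elliptic, shows that $D:H^{s+m}\to H^s$ is Fredholm with cokernel $\ker D^*$. Elliptic regularity gives $\ker D^*\subset C^\infty$, and one concludes as above.
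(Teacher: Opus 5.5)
Your argument is correct and is essentially the paper's approach: the paper gives no proof of its own and simply invokes the elliptic theory of \cite[Chapter 18]{hormander}, whose content is exactly the parametrix construction you cite in Step 1. Your Step 2 usefully makes explicit how the $L^2$ Fredholm property of $I-R$ yields finite codimension of $\mathrm{Im}(D)$ inside the smooth sections $\Chi(M,\C)$, a step the paper leaves implicit.
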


\begin{prop} \label{theorem : principal symbol surjective}
    Let $D : C^{\infty}(M,\C^\ell)\rightarrow \Chi(M,\C)$ be a differential operator of order $m\in \Z_+$. If its principal symbol $\sigma^m(D,x,\xi): \C^\ell \rightarrow T_xM\otimes \C$ is surjective for every $(x,\xi)\in T^*M \backslash 0$, then $\mathrm{Im}(D)$ has finite codimension in $\Chi(M,\C)$.
\end{prop}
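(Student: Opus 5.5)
The plan is to reduce the surjective case to the bijective case covered by the preceding theorem, by composing $D$ with a suitable right inverse at the symbol level. Write $n=\dim M$. Surjectivity of $\sigma^m(D,x,\xi)$ forces $\ell\geq n$. I will fix a smooth Riemannian metric $h$ on $M$ together with Hermitian structures on the trivial bundle $\C^\ell$ and on $TM\otimes\C$. Let $D^*:\Chi(M,\C)\to C^\infty(M,\C^\ell)$ be the formal adjoint of $D$ with respect to these structures and the Riemannian volume. It is a differential operator of order $m$, and its principal symbol is $\sigma^m(D^*,x,\xi)=\sigma^m(D,x,\xi)^*$, up to the conventional sign or power of $i$, which depends only on $m$.

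Next I consider the composite $P:=D\circ D^*:\Chi(M,\C)\to\Chi(M,\C)$. This is a differential operator of order $2m$. By multiplicativity of principal symbols, its symbol is $\sigma^{2m}(P,x,\xi)=c\,\sigma^m(D,x,\xi)\sigma^m(D,x,\xi)^*$ for a nonzero constant $c$. If $A:\C^\ell\to T_xM\otimes\C$ is surjective, then $AA^*$ is injective on $T_xM\otimes\C$: indeed $AA^*v=0$ implies $\|A^*v\|^2=0$, so $A^*v=0$, and $A^*$ is injective because $A$ is surjective. Hence $AA^*$ is bijective. So $P$ has bijective principal symbol at every $(x,\xi)\in T^*M\backslash 0$.

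To apply the recalled theorem literally, I identify $\Chi(M,\C)$ with sections of a rank-$n$ bundle. That theorem is stated for a source $C^\infty(M,\C^\ell)$, but elliptic regularity and the finite codimension of the range hold equally for operators between sections of arbitrary vector bundles (cf. \cite[Chapter 18]{hormander}). Alternatively, one embeds $TM\otimes\C$ into a trivial bundle and adjusts. Either way I obtain that $\mathrm{Im}(P)$ has finite codimension in $\Chi(M,\C)$. Finally, $\mathrm{Im}(P)=D(\mathrm{Im}(D^*))\subseteq\mathrm{Im}(D)$, so $\mathrm{Im}(D)$ also has finite codimension.

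The main obstacle is the bookkeeping in this third step. The recalled theorem is phrased with domain $C^\infty(M,\C^\ell)$, while $P$ acts on vector fields, so I must either invoke the general bundle version of elliptic theory or trivialize carefully. One route is to choose a trivialization of $TM\otimes\C\oplus E$ for a complementary bundle $E$ and extend $P$ by the identity times a suitable elliptic operator on $E$, for example a Laplacian of order $2m$. The other points are routine: the symbol of the adjoint and of a composition, and smoothness of the chosen metrics.
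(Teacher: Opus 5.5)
Your proposal is correct and follows the paper's proof. Both form $DD^*$, note that $\sigma^m(D)\sigma^m(D)^*$ is bijective when $\sigma^m(D)$ is surjective, use elliptic theory to get finite codimension of $\mathrm{Im}(DD^*)$, and conclude from $\mathrm{Im}(DD^*)\subseteq\mathrm{Im}(D)$. The paper passes silently over the point that $DD^*$ acts on $\Chi(M,\C)$ rather than on $C^\infty(M,\C^\ell)$, so the vector-bundle version of the elliptic theorem is needed; you flagged this explicitly, and your handling of it is fine.
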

\begin{proof}
    If $\sigma^m(D)$ is surjective, then $\sigma^m(D)\sigma^m(D)^*=\sigma^m(DD^*)$ is bijective. So $DD^*$ is elliptic, and so $\mathrm{Im}(DD^*)$ has finite codimension in $\Chi(M,\C)$. But $\mathrm{Im}(DD^*)\subseteq \mathrm{Im}(D)$, so $\mathrm{Im}(D)$ also has finite codimension in $\Chi(M,\C)$.
\end{proof}

\begin{rem}
    Suppose in addition to the hypothesis of \Cref{theorem : principal symbol surjective} that the coefficients of $D$ are real-valued, i.e., $D(\overline{f})=\overline{D(f)}$ for all $f\in C^{\infty}(M,\C^\ell)$.
    In this case, $D(C^{\infty}(M,\R^\ell))\subset \Chi(M,\R)$, and $D(C^{\infty}(M,\R^\ell))$ has finite codimension in $\Chi(M,\R)$.
    %Notice also that in this case, the principal symbol is a map $\sigma^m(D)(x,\xi):\R^\ell\to T_xM$.
\end{rem}
We will now recall the Direct sum construction of differential operators. Suppose that we have two linear differential operators of order $m$ with real coefficients $D_1:C^{\infty}(M,\R^{\ell_{1}}) \rightarrow \Chi(M,\R)$ and $D_2:C^{\infty}(M,\R^{\ell_{2}}) \rightarrow \Chi(M,\R)$.
We can then define their direct sum
\begin{equation*}\begin{aligned}
        D_1\oplus D_2:C^\infty(M,\R^{\ell_1+\ell_2})\to \Chi(M,\R),\quad D_1\oplus D_2(f_1,f_2)=D_1(f_1)+D_2(f_2).
    \end{aligned}\end{equation*}
The differential operator $D_1\oplus D_2$ has real coefficients, and its principal symbol is given by
\begin{equation*}\begin{aligned}
        \sigma^m(D_1\oplus D_2,x,\xi)(\lambda,\mu)=\sigma^m(D_1,x,\xi)(\lambda)+  \sigma^m(D_2,x,\xi)(\mu) ,\quad \lambda\in \C^{\ell_1},\mu\in \C^{\ell_2}.
    \end{aligned}\end{equation*}
So, the image of the principal symbol of $D_1\oplus D_2$ is the sum of the images of the principal symbols of $D_1$ and $D_2$.
\section{Proof of \cref{theorem : finite codimension}}\label{sec:core}

In this section, we prove the following statement, from which \cref{theorem : finite codimension} follows as a corollary, according to \cref{theorem : principal symbol surjective}.

\begin{theorem}\label{theorem : elliptic construction}
    There exist $\ell \in \Z_+$ and $D: C^\infty(M,\R^\ell)\rightarrow \Chi(M,\R)$ a differential operator with real coefficients of order $1$ such that $\mathrm{Im}(D)\subset \LL_g^1$ and $\sigma^1(D,x,\xi):\C^\ell\to T_xM\otimes \C$ is surjective for all $(x,\xi)\in T^*M\backslash 0$.
\end{theorem}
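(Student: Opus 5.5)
The plan is to build $D$ as a direct sum of finitely many first-order operators, each of which lies in $\LL_g^1$ because a cancellation removes its second-order part. The key identity is the following. Fix $h\in C^\infty(M,\R)$ and consider, for $u\in C^\infty(M,\R)$,
\begin{equation*}
D_h(u):=[\nabla_g h,\nabla_g(hu)]-[\nabla_g(h^2/2),\nabla_g u]\in \LL_g^1 .
\end{equation*}
Using $\nabla_g(hu)=h\nabla_g u+u\nabla_g h$ and $\nabla_g(h^2/2)=h\nabla_g h$, we expand the two brackets. The terms $h[\nabla_g h,\nabla_g u]$, which carry the second derivatives of $u$, cancel. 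What remains is
\begin{equation*}
D_h(u)=\mathrm{d}h(\nabla_g h)\,\nabla_g u+2\,\mathrm{d}u(\nabla_g h)\,\nabla_g h ,
\end{equation*}
that is, $g(\nabla_gh,\nabla_gh)\nabla_g u+2g(\nabla_g h,\nabla_g u)\nabla_g h$. This is a first-order operator with real coefficients. Its principal symbol, up to the factor $i$, is
\begin{equation*}
\sigma^1(D_h,x,\xi)(\lambda)=\lambda\big(\mathrm{d}h_x(\nabla_gh(x))\,\xi^\sharp+2\,\xi(\nabla_g h(x))\,\nabla_gh(x)\big),
\end{equation*}
where $\xi^\sharp$ is the $g$-dual of $\xi$. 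We also use the operator $D_0(u)=\nabla_g u\in\LL_g^0\subset\LL_g^1$, whose symbol is $\lambda\mapsto i\lambda\xi^\sharp$. Subtracting the $\xi^\sharp$ component using $D_0$, the image of the symbol of $D_0\oplus D_h$ at $(x,\xi)$ contains both $\xi^\sharp$ and $\xi(v)v$, where $v=\nabla_gh(x)$.

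By the remark on direct sums, it is then enough to find finitely many functions $h_1,\dots,h_N$ with the following property. For every $(x,\xi)\in T^*M\setminus 0$, the vectors $\xi(v_j)v_j$, with $v_j=\nabla_g h_j(x)$, span $T_xM$ (over $\C$ after complexification; $\xi$ is real). Then $D=D_0\oplus D_{h_1}\oplus\dots\oplus D_{h_N}$, with $\ell=N+1$, has surjective principal symbol everywhere and image in $\LL_g^1$, since $\LL_g^1$ is a vector space.

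The pointwise linear algebra goes as follows. Suppose $v_1,\dots,v_{2n-1}\in T_xM$, with $n=\dim M$, are in general position, meaning any $n$ of them are linearly independent. Then a hyperplane $\ker\xi$ contains at most $n-1$ of them. Hence at least $n$ of them satisfy $\xi(v_j)\neq0$, and the corresponding $\xi(v_j)v_j$ form a basis. General position is an open condition, and $g$ is non-degenerate, so $\nabla_g$ is a pointwise isomorphism from $\mathrm{d}h_x$ to $\nabla_g h(x)$. Therefore, around each point $x_0$ we can pick $2n-1$ functions, namely suitable linear combinations of local coordinates multiplied by a cutoff equal to $1$ near $x_0$. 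Their gradients are in general position on a neighbourhood of $x_0$.

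Compactness of $M$ gives a finite cover by such neighbourhoods. We take the union of all the corresponding functions. At any point $x$, some subfamily already does the job, and extra vectors $\xi(v_j)v_j$ only enlarge the span, even when they vanish. The step needing most care is the cancellation identity for $D_h$: checking that all second-order terms really drop out. This is what makes an order-$1$ operator land in $\LL_g^1$, instead of the naive second-order operators $u\mapsto[\nabla_gh,\nabla_g u]$, whose symbols have rank one. Once this identity is established, the general-position covering argument is routine.
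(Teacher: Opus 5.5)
Your proof is correct, and it takes a genuinely different route from the paper's in two places, although the underlying identity is the same.

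\textbf{Removing the second-order term.} The paper also expands
\[
[\nabla_g f,\nabla_g(fh)]=\tfrac12[\nabla_g(f^2),\nabla_g h]+2\,\mathrm{d}h(\nabla_g f)\nabla_g f+\mathrm{d}f(\nabla_g f)\nabla_g h .
\]
It kills the second-order term by summing over a family with $\sum_j f_j^2=1$. This forces it to prove an auxiliary lemma producing such families with $\nabla_g f_1(x_0)=X$ and $\nabla_g f_j(x_0)=0$ for $j\neq 1$. You instead subtract $[\nabla_g(h^2/2),\nabla_g u]$ directly, which is legitimate because it is itself a bracket of gradients in $\LL_g^1$. In fact the paper's operator $\sum_j D_{f_j}$ coincides with the sum of your $D_{f_j}$, since $\nabla_g(\sum_j f_j^2/2)=0$. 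So your version is the more economical packaging of the same cancellation, and it makes the partition lemma unnecessary.

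\textbf{Globalizing.} The paper proceeds covector by covector. At a fixed $(x_0,\xi_0)$ it chooses a basis $X_1,\dots,X_m$ with $\langle\xi_0,X_1\rangle\neq0$ and $\langle\xi_0,X_j\rangle=0$ for $j\geq2$. It uses the operators attached to $X_1$ and $X_1+X_j$ together with $\nabla_g$. It then globalizes via openness of surjectivity and compactness of the cosphere bundle.

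Your general-position argument with $2n-1$ gradients handles all $\xi\in T_x^*M\setminus 0$ at once. As a result, only compactness of $M$ is needed, and you get the explicit value $\ell=1+(2n-1)k$, with $k$ the number of neighbourhoods in your cover. The price is a small combinatorial input: the existence of $2n-1$ vectors in general position, and the openness of that condition. The paper's $\xi$-adapted basis trick avoids this input.
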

\begin{lemma}\label{lem : symbol at point}
    For every $(x,\xi)\in T^*M\backslash 0$, there exist $\ell \in \Z_+$ and $D: C^\infty(M,\R^\ell)\rightarrow \Chi(M,\R)$ a differential operator with real coefficients of order $1$ such that $\mathrm{Im}(D)\subset \LL_g^1$ and $\sigma^1(D,x,\xi)$ is surjective.
\end{lemma}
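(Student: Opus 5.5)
The plan is to build $D$ as a finite direct sum of first-order operators of the form
\begin{equation*}
D_h(f):=[\nabla_g h,\nabla_g f]-\nabla_g\big(\mathrm{d}f(\nabla_g h)\big),\qquad f\in C^\infty(M,\R),
\end{equation*}
where $h\in C^\infty(M,\R)$ is a fixed auxiliary function. Each $D_h$ has real coefficients, and $\mathrm{Im}(D_h)\subset \LL_g^1$. Indeed, $[\nabla_g h,\nabla_g f]$ is a bracket of two elements of $\LL_g^0$, and $\nabla_g(\mathrm{d}f(\nabla_g h))\in\LL_g^0\subset\LL_g^1$. Since $\LL_g^1$ is by definition the span of such elements, finite sums of them stay in $\LL_g^1$, so the image of the direct sum also lies in $\LL_g^1$.

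The first step is to check that $D_h$ is of order $1$, not $2$. In local coordinates write $X=\nabla_g h$, so $X^i=g^{ij}\partial_j h$, and $\nabla_g f=g^{ij}\partial_j f\,\partial_i$. The second-order part of $[X,\nabla_g f]$ is $X^k g^{ij}\partial_k\partial_j f\,\partial_i$. The second-order part of $\nabla_g(X^k\partial_k f)$ is $g^{ij}X^k\partial_j\partial_k f\,\partial_i$. These coincide, so they cancel in $D_h$.

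The remaining first-order terms are
\begin{equation*}
X^k(\partial_k g^{ij})\partial_j f\,\partial_i-g^{kj}\partial_j f\,(\partial_k X^i)\,\partial_i-g^{ij}(\partial_j X^k)\partial_k f\,\partial_i .
\end{equation*}
Up to a factor $\sqrt{-1}$, the principal symbol at $(x,\xi)$ applied to $\lambda\in\C$ is therefore $\lambda\,v$, where
\begin{equation*}
v=(X\cdot\partial g^{-1})\xi-A\,g^{-1}\xi-g^{-1}A^{T}\xi,\qquad A=DX(x).
\end{equation*}

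The second step is to choose $h$ so that $v$ is computable. Fix $(x,\xi)$ and take $h$ with $\mathrm{d}h_x=0$. Then $X(x)=0$, so the first term of $v$ vanishes. Moreover $A=g^{-1}H$, where $H$ is the Hessian of $h$ at $x$, a well-defined symmetric matrix at a critical point that can be prescribed arbitrarily. Using the symmetry of $g$ and $H$, we get $A^T=Hg^{-1}$ and hence
\begin{equation*}
v=-2\,g^{-1}H\,w,\qquad w:=g^{-1}\xi .
\end{equation*}
Here $w\neq 0$ because $\xi\neq0$ and $g$ is non-degenerate.

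For a fixed nonzero $w\in\R^n$, the map $H\mapsto Hw$ from symmetric matrices onto $\R^n$ is surjective; for instance, rank-one combinations $ab^T+ba^T$ suffice. Since $g^{-1}$ is invertible, we can therefore choose $h_1,\dots,h_n$, with $n=\dim M$, each having a critical point at $x$ and suitable Hessians, so that the vectors $v_1,\dots,v_n$ span $T_xM$. Such $h_k$ exist by taking a local quadratic in coordinates multiplied by a cutoff.

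Finally, set $\ell=n$ and $D=D_{h_1}\oplus\dots\oplus D_{h_n}$. By the direct sum construction, the image of $\sigma^1(D,x,\xi)$ is the sum of the images $\C v_k$. These span $T_xM\otimes\C$, because real vectors spanning $T_xM$ also span its complexification. This gives the surjectivity required in Lemma \ref{lem : symbol at point}.

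The main obstacle is the order-reduction step: $[\nabla_g h,\nabla_g f]$ alone is of order $2$ in $f$, and it must be corrected by a gradient term with the same principal part. The symbol computation at a critical point of $h$ is the only place where the symmetry of $g$ is genuinely used; this is consistent with the antisymmetric case failing.
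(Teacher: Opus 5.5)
Your proof is correct, and it takes a genuinely different route from the paper's.

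\emph{How the paper lowers the order.} It starts from $D_f(h)=[\nabla_g f,\nabla_g(fh)]$, which on its own is of order $2$. It then sums over a family with $\sum_j f_j^2=1$, supplied by an auxiliary lemma, so that the term $\tfrac12[\nabla_g(\sum_j f_j^2),\nabla_g h]$ cancels. The resulting symbol $2\la\xi_0,X\ra X+g_{x_0}(X,X)\xi_0^\flat$ still carries a $\xi_0^\flat$ component. The paper removes it with the extra operator $D'=\nabla_g$ and then concludes with the basis $X_1$, $X_1+X_j$.

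\emph{How you lower the order.} You subtract a gradient, which already lies in $\LL_g^0$, so each individual $D_h$ has order $\le 1$ for every $h$. Intrinsically, $D_h(f)=(L_{\nabla_g h}g^{-1})(\mathrm{d}f)$, the Lie derivative of the operator $\nabla_g$ along $\nabla_g h$. Your symbol $-2g^{-1}Hg^{-1}\xi$ is the familiar identity $L_{\nabla_g h}\,g=2\,\mathrm{Hess}\,h$, evaluated at a critical point.

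\emph{What your route buys.} The argument is shorter: it needs neither the sum-of-squares lemma nor the auxiliary operator $D'$. It also explains the antisymmetric case transparently. When $g$ is symplectic, gradient (Hamiltonian) flows preserve $g$, so $D_h\equiv 0$ identically.

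\emph{What the paper's route buys.} It is coordinate-free, using only Leibniz-rule manipulations. It also isolates the use of symmetry in the single identity $\mathrm{d}f(\nabla_g h)=\mathrm{d}h(\nabla_g f)$. In your version, symmetry enters through a local-coordinate computation, via $(g^{-1}H)^T=Hg^{-1}$.

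Both arguments work unchanged for indefinite $g$.
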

\begin{proof}[Proof of \Cref{theorem : elliptic construction}]
    By continuity of the principal symbol, if $\sigma^1(D,x,\xi)$ is surjective at $(x,\xi)$, then it is surjective in a neighborhood of $(x,\xi)$.
    So, by compactness of the sphere cotangent bundle, we can find a finite number of differential operators
    \begin{equation*}\begin{aligned}
            D_{1}: C^\infty(M,\R^{\ell_1})\rightarrow \Chi(M,\R),\dots, D_{n}: C^\infty(M,\R^{\ell_n})\rightarrow \Chi(M,\R)
        \end{aligned}\end{equation*}
    such that $\mathrm{Im}(D_{i})\subset \LL_g^1$ for all $i$, and for every $(x,\xi)\in T^*M\backslash 0$, there exists $i$ such that $\sigma^1(D_{i},x,\xi)$ is surjective.
    We can then define
    the differential operator
    \begin{equation*}\begin{aligned}
            D=D_1\oplus \cdots\oplus D_n:C^\infty(M,\R^{\ell_1+\cdots+\ell_n})\rightarrow \Chi(M,\R),
        \end{aligned}\end{equation*}
    which satisfies
    $\mathrm{Im}(D)\subset \LL_g^1$, and for every $(x,\xi)\in T^*M\backslash 0$, $\sigma^1(D,x,\xi)$ is surjective.
\end{proof}

\begin{proof}[Proof of \cref{lem : symbol at point}.]
    Let $f\in C^{\infty}(M,\R)$. We introduce the differential operator
    \begin{equation*}\begin{aligned}
            D_f : C^{\infty}(M,\R)\rightarrow \Chi(M,\R),\quad D_f( h) =[\nabla_g f,\nabla_g(fh)].
        \end{aligned}\end{equation*}
    First we note that $\mathrm{Im}D_f \subset \LL^1_g$. The differential operator $D_f$ is of order 2. Let us calculate the operator more explicitly by expanding the Lie bracket.
    For every $h\in C^{\infty}(M,\R)$, we have
    \begin{align*}
        D_f(h) & =[\nabla_gf,\nabla_g(fh)]                                                                                      \\
               & =[\nabla_gf,f\nabla_gh+h\nabla_gf]                                                                             \\
               & =[\nabla_g f,f\nabla_gh]+[\nabla_gf,h\nabla_gf]                                                                \\
               & =f[\nabla_g f,\nabla_gh]+\mathrm{d}f(\nabla_gf)\nabla_gh+h[\nabla_g f,\nabla_g f]+\mathrm{d}h(\nabla_g f)\nabla_g f              \\
               & =f[\nabla_g f,\nabla_gh]+\mathrm{d}f(\nabla_gf)\nabla_gh+\mathrm{d}h(\nabla_g f)\nabla_g f                                       \\
               & =[f\nabla_gf,\nabla_gh]+\mathrm{d}f(\nabla_gh)\nabla_gf+\mathrm{d}f(\nabla_gf)\nabla_gh+\mathrm{d}h(\nabla_g f)\nabla_g f                 \\
               & =\frac{1}{2}[\nabla_g(f^2),\nabla_g h]+\mathrm{d}f(\nabla_gh)\nabla_gf+\mathrm{d}f(\nabla_gf)\nabla_gh+\mathrm{d}h(\nabla_g f)\nabla_g f.
    \end{align*}
    Moreover, by the symmetry of $g$,
    \begin{equation*}
        \mathrm{d}f(\nabla_gh)=g(\nabla_gf,\nabla_gh)=g(\nabla_gh,\nabla_g f)=\mathrm{d}h(\nabla_gf).
    \end{equation*}
    So,
    \begin{equation*}
        D_f(h)=\frac{1}{2}[\nabla_g(f^2),\nabla_gh]+2\mathrm{d}h(\nabla_gf)\nabla_gf+\mathrm{d}f(\nabla_g f)\nabla_gh.
    \end{equation*}

    \begin{rem}
        If $g$ was anti-symmetric, for example if $g$ was a symplectic form, the term $\mathrm{d}f(\nabla_gh)\nabla_gf+\mathrm{d}h(\nabla_gf)\nabla_gf$ would vanish. This is the term which ultimately gives surjectivity of the symbol.
    \end{rem}
    Now, suppose that we have a finite family of functions $(f_j)_{j\in J}\subseteq C^{\infty}(M,\R)$, where $J\subseteq \Z_+$ has a finite cardinality, such that $\sum_{j\in J}f_j^2=1$.
    Then, if we consider the differential operator
    \begin{equation*}\begin{aligned}
            \sum_{j\in J}D_{f_j}: C^{\infty}(M,\R)\rightarrow\Chi(M,\R),\quad (\sum_{j\in J}D_{f_j})(h)=\sum_{j\in J}D_{f_j}(h),
        \end{aligned}\end{equation*}
    we have
    \begin{align*}
        \sum_{j\in J}D_{f_j}(h) & =\frac{1}{2}[\nabla_g\left(\sum_{j\in J}f_j^2\right),\nabla_gh]+\sum_{j\in J}2\mathrm{d}h(\nabla_gf_j)\nabla_gf_j+\mathrm{d}f_j(\nabla_gf_j)\nabla_gh \\
                            & =\sum_{j\in J}2\mathrm{d}h(\nabla_gf_j)\nabla_gf_j+g(\nabla_gf_j,\nabla_gf_j)\nabla_gh.
    \end{align*}
    So $\sum_{j\in J}D_{f_j}$ is of order less than or equal to $1$. Moreover $\mathrm{Im}(\sum_{j\in J}D_{f_j})\subset \LL_g^1$. Let us compute its principal symbol of order 1. For every $(x,\xi)\in T^*M\backslash 0$,
    \begin{equation*}
        \sigma^1(\sum_{j\in J}D_{f_j},x,\xi)=\sum_{j\in J}2\la \xi, \nabla_g f_j(x)\ra\nabla_g f_j(x)+g_x(\nabla_g f_j(x),\nabla_g f_j(x))\xi^{\flat},
    \end{equation*}
    where $\xi^\flat$ is the unique element of $T_xM$ such that $\la \xi,v\ra=g_x(\xi^\flat,v)$ for every $v\in T_xM$.
    \begin{lemma}\label{lemma : parition}
        For every $x_0\in M$, $X\in T_{x_0}M\backslash \{0\}$, there exists a finite family of functions $f_1,\cdots,f_n\in C^\infty(M,\R)$ such that $\sum_{j=1}^nf_j^2=1$, $\nabla_gf_1(x_0)=X$ and $\nabla_gf_j(x_0)=0$ if $j\neq1$.
    \end{lemma}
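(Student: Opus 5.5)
Since $g$ is non-degenerate, $\nabla_g f(x_0)=X$ is equivalent to $\mathrm{d}f_{x_0}=X^\sharp$, where $X^\sharp:=g_{x_0}(X,\cdot)\in T^*_{x_0}M\setminus\{0\}$. So the lemma is purely about first-order jets at $x_0$. We need smooth $f_1,\dots,f_n$ with $\sum_j f_j^2=1$, $\mathrm{d}(f_1)_{x_0}=X^\sharp$, and $\mathrm{d}(f_j)_{x_0}=0$ for $j\ge 2$. The metric $g$ plays no further role.

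\textbf{Step 1: a local building block.} Choose a chart around $x_0$ and a function $\phi\in C^\infty(M,\R)$ with $\phi(x_0)=0$ and $\mathrm{d}\phi_{x_0}=X^\sharp$; for instance, a linear coordinate function in the chart, cut off. The trouble is that $f_1=\phi$ vanishes at $x_0$, so some other $f_j$ must equal $\pm1$ there to achieve $\sum_j f_j^2=1$. This is harmless because the derivatives of that $f_j$ at $x_0$ only need to vanish. Concretely, I would try to take
\[
f_1=\sin\phi,\qquad f_2=\cos\phi .
\]
Then $f_1^2+f_2^2=1$ everywhere, $\mathrm{d}(f_1)_{x_0}=\cos(0)\,\mathrm{d}\phi_{x_0}=X^\sharp$, and $\mathrm{d}(f_2)_{x_0}=-\sin(0)\,\mathrm{d}\phi_{x_0}=0$.

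\textbf{Step 2: globalization.} Since $\sin$ and $\cos$ are entire functions, $f_1$ and $f_2$ are smooth on all of $M$ as soon as $\phi$ is a global smooth function. A global $\phi$ with $\mathrm{d}\phi_{x_0}=X^\sharp$ exists by taking a bump function $\chi$ equal to $1$ near $x_0$ and setting $\phi=\chi\cdot\langle X^\sharp,\text{coordinates}\rangle$ in a chart, extended by $0$. This gives the lemma with $n=2$, and with $\nabla_g f_1(x_0)=X$ and $\nabla_g f_2(x_0)=0$ by the non-degeneracy remark above.

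\textbf{Main obstacle.} The only real obstacle is the tension between the global constraint $\sum_j f_j^2=1$ and the pointwise derivative constraints. Parametrizing the circle by $(\sin\phi,\cos\phi)$ removes this tension completely, so once this choice is made I expect no step to require real work. In the sequel the lemma is used to make the symbol $2\langle\xi,\nabla_g f_1\rangle\nabla_g f_1+g(\nabla_g f_1,\nabla_g f_1)\xi^\flat$ hit chosen directions. For that purpose, families built from different vectors $X$ can be direct-summed as in Section~\ref{sec:elliptic}.
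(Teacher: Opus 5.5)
Your proof is correct and is essentially the paper's: both take $f_1=\sin\phi$, $f_2=\cos\phi$ for a function $\phi$ vanishing at $x_0$ with $\mathrm{d}\phi_{x_0}=g_{x_0}(X,\cdot)$; the paper uses $\phi=x_1$ in an adapted chart. The only difference is the globalization. The paper defines these functions near $x_0$ and appeals to a partition of unity, while you cut off $\phi$ itself before composing with $\sin$ and $\cos$, so that $\sin^2\phi+\cos^2\phi=1$ holds on all of $M$ with no gluing and $n=2$ exactly.
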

    \begin{proof}
        By duality with $g$, we need to prove that for every $x_0\in M$ and $\xi_0\in T^*M\backslash 0$,
        there exists a finite family of functions $f_1,\cdots,f_n\in C^\infty(M,\R)$ such that $\sum_{j=1}^nf_j^2=1$, $\mathrm{d}f_1(x_0)=\xi_0$ and $\mathrm{d}f_j(x_0)=0$.
        By a partition of unity argument, it is enough to define the functions $f_1,\cdots,f_n$ near $x_0$.
        Let $m=\dim(M)$.
        We can choose a local chart such that $M=\R^{m}$, $x_0=0$, $\xi_0=(1,0,\cdots,0)$.
        In this case, we take the functions $f_1(x_1,\cdots,x_m)=\sin(x_1),f_2(x_1,\cdots,x_m)=\cos(x_1)$.
    \end{proof}
    We now fix $(x_0,\xi_0)\in T^*M\backslash 0$ and $X\in T_{x_0}M\backslash \{0\}$. Let $\mathcal{D}_X:=\sum_{j=1}^nD_{f_j}:C^\infty(M,\R)\to \Chi(M,\R)$ where $(f_j)_{j=1,\cdots,n}$ is the family obtained from \cref{lemma : parition}. Its principal symbol at $(x_0,\xi_0)$ is given by
    \begin{equation*}
        \sigma^1(\mathcal{D}_X,x_0,\xi_0)=2\la \xi_0,X\ra X+g_{x_0}(X,X)\xi_0^\flat.
    \end{equation*}
    Finally, notice that the differential operator
    \begin{equation*}\begin{aligned}
            D':C^\infty(M,\R)\to \Chi(M,\R),\quad h\mapsto  \nabla_gh,
        \end{aligned}\end{equation*}
    is also of order $1$, and its principal symbol of order $1$ at $(x_0,\xi_0)$ is equal to $\xi^\flat_0$. Furthermore $\mathrm{Im}(D')\subseteq \LL_g^1$.

    We now choose any linear basis $X_1,\dots,X_m\in T_{x_0}M$ where $m=\dim(M)$ and such that $\langle \xi_0,X_1\rangle\neq 0$ and $\langle \xi_0,X_j\rangle=0, j=2,\dots,m$.
    Let $D$ be the direct sum of the differential operators $D'$, $\mathcal{D}_{X_1}$, and $\mathcal{D}_{X_1+X_j}$ for $j=2,\cdots,m$.
    The image of the principal symbol of $D$ at $(x_0,\xi_0)$ contains $\xi_0^\flat$, and  so it contains
    \begin{equation*}\begin{aligned}
            2\la \xi_0,X_1\ra X_1,\quad 2\la \xi_0,X_1+X_j\ra (X_1+X_j), \qquad j=2,\dots,m.
        \end{aligned}\end{equation*}
    Hence, the principal symbol of $D$ is surjective.
\end{proof}

\section{Proof of Corollary \ref{cor : main corollary}}\label{sec:corollary}
The proof of Corollary \ref{cor : main corollary} is based on a theorem of W. Thurston \cite{thurston} and Theorem \ref{theorem : main theorem}.
\begin{theorem}[W. Thurston]\label{thurston theorem}
The group ${\rm Diff_0}(M)$ is simple. 
\end{theorem}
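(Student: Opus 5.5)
This statement is classical and the paper simply cites \cite{thurston}. If I had to prove it, I would use the standard three-layer scheme: Epstein's abstract simplicity criterion, fragmentation, and perfectness. Perfectness is the hard analytic input, and I would derive it from Herman's theorem on the torus.

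\emph{Step 1 (Epstein's criterion).} Let $G={\rm Diff_0}(M)$ and let $\mathcal U$ be the family of open balls in $M$. I would verify Epstein's axioms.
First, $G$ acts transitively on $\mathcal U$, in the sense that any ball can be moved into any other.
Second, for $U\in\mathcal U$ and $g$ in a neighbourhood of the identity, $g$ factors as a product of elements supported in finitely many balls.
Third, for every finite set of balls $U_1,\dots,U_k$ there is $h\in G$ carrying their union into a single ball.
Epstein's theorem then says that the commutator subgroup $[G,G]$ is simple. It is therefore enough to show that $G$ is perfect, i.e.\ $G=[G,G]$.

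\emph{Step 2 (fragmentation).} This step proves the second axiom. Take $g$ $C^1$-close to the identity and a finite cover $U_1,\dots,U_k$ of $M$ by balls with a subordinate partition of unity. In a chart write $g$ as the time-one map of a small vector field, or interpolate $g$ with the identity via $g_s(x)=\exp_x(\phi_1(x)\cdots v(x))$. This produces $g=g_k\circ\cdots\circ g_1$ with $\mathrm{supp}(g_i)\subset U_i$. Smallness ensures that each partial composition is still a diffeomorphism. Since $G$ is connected and therefore generated by any neighbourhood of the identity, every element of $G$ is a product of diffeomorphisms supported in balls.

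\emph{Step 3 (perfectness).} By Step 2 it suffices to show that every compactly supported diffeomorphism of $\R^n$ close to the identity is a product of commutators of compactly supported diffeomorphisms. I would follow Thurston and reduce this to the torus $T^n$.

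On $T^n$, Herman's theorem says the following. Fix a Diophantine rotation $R_\alpha$. Then every $f\in{\rm Diff_0}(T^n)$ sufficiently close to $R_\alpha$ can be written as $f=R_\beta\circ h R_\alpha h^{-1}R_\alpha^{-1}$. Here $h$ is a diffeomorphism and $R_\beta$ is a rotation. This is proved with a Nash--Moser/KAM inverse function theorem applied to $(\beta,h)\mapsto R_\beta hR_\alpha h^{-1}$. The linearization is the cohomological equation $u-u\circ R_\alpha=v-\mathrm{mean}(v)$, and this equation is solvable with tame loss of derivatives exactly because $\alpha$ is Diophantine.

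Rotations are themselves products of commutators, since each $R_\beta$ is conjugate to a product of small rotations and one can use commutators with nonlinear maps. Hence ${\rm Diff_0}(T^n)$ is perfect.

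To pass from the torus to $\R^n$, I would use Thurston's trick. Take a map $g$ supported in a small cube, lift it periodically to $\R^n$, and express it through torus commutators. Then cut off by composing with a shrinking map. Any error terms are compactly supported and can be absorbed by the infinite-product (``swindle'') argument over disjoint translated cubes. This yields perfectness of compactly supported diffeomorphisms of $\R^n$, and hence of $G$.

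The main obstacle is Step 3, specifically Herman's KAM-type decomposition on $T^n$. Its proof requires the tame Nash--Moser estimates and control of small divisors. The remaining steps are soft: Epstein's criterion is pure group theory, and fragmentation is elementary. Combining Steps 1--3, $G=[G,G]$ is simple.
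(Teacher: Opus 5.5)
Your architecture (Epstein's criterion, fragmentation, perfectness via Herman's theorem on $T^n$) is the standard one; the paper gives no proof and only cites Thurston. The gap is in the step that carries the real content: the passage from $T^n$ to $\R^n$ at the end of Step~3. That step is not soft, and the mechanism you describe for it fails in the $C^\infty$ category.

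Write $G_c={\rm Diff}_c(\R^n)_0$ and $H_1(G_c)=G_c/[G_c,G_c]$. Cutting off the periodic identity $\hat g=\prod_i[\tilde a_i,\tilde b_i]$ to a large region only shows that (a product of $N$ disjoint translates of $g$)$\,\cdot\,e$ lies in $[G_c,G_c]$, with $e$ supported near the boundary of the region. In other words, $N[g]=-[e]$ in $H_1(G_c)$, and nothing in your sketch controls $[e]$.

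The swindle cannot absorb $e$ inside $G_c$:
\begin{itemize}
\item If the translates run off to infinity, the infinite product is smooth but not compactly supported. At best $e$ becomes a commutator in a larger group of diffeomorphisms with bounded derivatives, not in $G_c$.
\item If the translates shrink to a point, or the picture is squeezed into a bounded region by a shrinking map, then $D(\phi^k e\phi^{-k})$ at $\phi^k(y)$ is conjugate to $De(y)$ and higher derivatives blow up. The limit is in general not even $C^1$.
\end{itemize}
Indeed, a swindle valid in $G_c$ would prove perfectness with no torus at all. For $g$ supported in a ball $B$ and $\phi$ with the $\phi^k(B)$ pairwise disjoint, $\tilde g=\prod_{k\ge0}\phi^kg\phi^{-k}$ satisfies $g=[\tilde g,\phi]$. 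That is Anderson's argument for homeomorphisms, and its failure for smooth maps is exactly why Herman's theorem is needed.

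What is missing is a genuine argument transporting the commutator relations of (the universal cover of) ${\rm Diff}_0(T^n)$ into relations in $H_1(G_c)$. This is the actual content of Thurston's deduction. As written, your Step~3 establishes perfectness of ${\rm Diff}_0(T^n)$ but not of $G_c$.

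\emph{Smaller points.}
\begin{itemize}
\item In Step~2, a diffeomorphism $C^1$-close to the identity need not be the time-one map of a vector field, because the exponential map of ${\rm Diff}(M)$ is not locally onto. Use only your interpolation $g_s$.
\item Your third ``axiom'' in Step~1 is false as stated (take balls covering $M$). Epstein's actual third axiom is the fragmentation property with the side condition ${\rm supp}(g_i)\cup g_{i-1}\cdots g_1(\overline U)\neq M$.
\item Herman's theorem should read: every $f$ near the identity is $R_\beta\circ[h,R_\alpha]$. Equivalently, every $f$ near $R_\alpha$ is $R_\beta hR_\alpha h^{-1}$.
\item The claim that rotations are products of commutators needs an actual reason. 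For example, each coordinate rotation lies in a copy of the perfect group $PSL(2,\R)$ acting projectively on that circle factor.
\item Transitivity on balls requires $M$ connected, which is also needed for the statement itself.
\end{itemize}
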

It is a direct consequence of Theorem \ref{thurston theorem} that the group generated by $\exp(\Chi(M,\R))$ is equal to ${\rm Diff_0}(M)$, because it is normal. So every $\phi\in {\rm Diff_0}(M)$ can be written as $\phi=\exp(X_k)\circ\dots\circ\exp(X_1)$ for some $X_1,\dots,X_k\in \Chi(M,\R), k\in \Z_+$. In order to prove Corollary \ref{cor : main corollary}, it then suffices to show that every flow of vector field, i.e. every diffeomorphism of the form $\phi=\exp(X), X\in\Chi(M,\R)$, is the limit of a sequence of diffeomorphisms in $\mathcal{G}_g$. This follows from Theorem \ref{theorem : main theorem} and the product formulas, for $X,Y\in \Chi(M,\R)$,
\begin{align*}
&\lim_{n\to \infty} \left(\exp(X/n)\exp(Y/n)\right)^n=\exp(X+Y),\\
&\lim_{n\to \infty} \left(\exp(X/n)\exp(Y/n)\exp(-X/n)\exp(-Y/n)\right)^{n^2}=\exp([X,Y]).
\end{align*}

\textbf{Acknowledgments.} The authors are supported by ANR-25-CE40-4062 (project QUEST), ANR-24-CE40-3008-01 (project QuBiCCS), ANR-11-LABX-0020 (Centre Henri Lebesgue), and \newline ANR-23-CE40-0016 (project OpART).

\bibliographystyle{unsrt}
\bibliography{references_new}

@article{beauchard-Pozzoli2,
author = {Beauchard, K. and Pozzoli, E.},
title = {Small-time approximate controllability of bilinear {S}chrödinger equations and diffeomorphisms},
year        = {2025},
journal = {Annales de l'Institut Henri Poincaré Analyse Non-Linéaire},
note = {Published online first}
}

@article{Liu,
 author = {Heintze, Ernst and Liu, Xiaobo},
 title = {Homogeneity of infinite dimensional isoparametric submanifolds},
 fjournal = {Annals of Mathematics. Second Series},
 journal = {Ann. Math. (2)},
 issn = {0003-486X},
 volume = {149},
 number = {1},
 pages = {149--181},
 year = {1999},
 language = {English},
 doi = {10.2307/121022},
 url = {https://eudml.org/doc/120132},
 zbMATH = {1278977},
 Zbl = {0931.53027}
}

@article{berger,
 AUTHOR = {Berger, Pierre and Turaev, Dmitry},
     TITLE = {Generators of groups of {H}amiltonian maps},
   JOURNAL = {Israel J. Math.},
  FJOURNAL = {Israel Journal of Mathematics},
    VOLUME = {267},
      YEAR = {2025},
    NUMBER = {1},
     PAGES = {237--252}
}

@Article{trelat-2017,
 Author = {Arguill{\`e}re, Sylvain and Tr{\'e}lat, Emmanuel},
 Title = {Sub-{Riemannian} structures on groups of diffeomorphisms},
 FJournal = {Journal of the Institute of Mathematics of Jussieu},
 Journal = {J. Inst. Math. Jussieu},
 ISSN = {1474-7480},
 Volume = {16},
 Number = {4},
 Pages = {745--785},
 Year = {2017},

}

@article{agrachev-caponigro,
author = {Agrachev, Andrei and Caponigro, Marco},
year = {2009},
month = {11},
pages = {2503-2509},
title = {Controllability on the group of diffeomorphisms},
volume = {26},
journal = {Annales de l'Institut Henri Poincaré Analyse Non-Linéaire},
}

@article{grabowski,
 author = {Grabowski, Janusz},
 title = {Isomorphisms and ideals of the {Lie} algebras of vector fields},
 fjournal = {Inventiones Mathematicae},
 journal = {Invent. Math.},
 issn = {0020-9910},
 volume = {50},
 pages = {13--33},
 year = {1978}
}

@book{hormander,
 author = {H{\"o}rmander, Lars},
 title = {The analysis of linear partial differential operators. {III}: {Pseudo}-differential operators},
 edition = {Reprint of the 1994 ed.},
 fseries = {Classics in Mathematics},
 series = {Class. Math.},
 issn = {1431-0821},
 isbn = {978-3-540-49937-4},
 year = {2007},
 publisher = {Berlin: Springer}
}

@article{thurston,
  title={Foliations and groups of diffeomorphisms},
  author={William P. Thurston},
  journal={Bulletin of the American Mathematical Society},
  year={1974},
  volume={80},
  pages={304-307},
}

\end{document}